\newcommand{\field}[1]{\mathbb{#1}}
\newcommand{\Z}{\field{Z}}
\newcommand{\F}{\field{F}}
\newcommand{\tensor} {\otimes}
\newcommand{\MM}{\mathcal{M}}
\newcommand{\set}[1]{\{#1\}}
\newcommand{\beq}{\begin{displaymath}}
\newcommand{\eeq}{\end{displaymath}}
\newcommand{\beqn}{\begin{equation}}
\newcommand{\eeqn}{\end{equation}}
\theoremstyle{plain}
\newtheorem{thm}{Theorem}
\newtheorem{cor}[thm]{Corollary}
\theoremstyle{definition}
\newtheorem{question}[thm]{Question}
\theoremstyle{remark}
\newtheorem{rem}[thm]{Remark}
\begin{document}

\begin{frontmatter}[classification=text]

\title{Sumsets as Unions of Sumsets of Subsets} 

\author[jse]{Jordan S. Ellenberg}

\begin{abstract}
We show that, for any subsets $S$ and $T$ of $\F_q^n$, there are subsets $S' \subset S$ and $T'  \subset T$ such that $|S'|+|T'| < c_q^n$ for some $c_q < q$, and $(S' + T) \cup (S+T') = S+T$.
\end{abstract}
\end{frontmatter}

 
The novel approach to additive combinatorics in abelian groups introduced by Croot, Lev, and Pach in \cite{CLP}  has led to rapid progress in a range of problems in extremal combinatorics:  for instance, a new upper bound for the cap set probem~\cite{ellegijs}, bounds for complexity of matrix-multiplication methods based on elementary abelian groups~\cite{blasiak}, bounds for the Erd\H{o}s-Szemeredi sunflower conjecture~\cite{naslund}, and polynomial bounds for the arithmetic triangle removal lemma~\cite{fox}.  In many of the applications, the original bound on cap sets in \cite{ellegijs} does not suffice for applications:  for instance, in \cite{blasiak} and \cite{fox} one needs to bound the size of a {\em multi-colored sum-free set}, a somewhat more general object.  

In the present note, we use the Croot-Lev-Pach lemma, combined with an older result of Meshulam on linear spaces of low-rank matrices, to prove a still more general lemma on sumsets which implies many of the combinatorial bounds used in applications so far.  Loosely speaking, we show that the sumset $S+T$ of two large subsets $S$ and $T$ of $\F_q^n$ can be expressed ``more efficiently" as a union of sumsets of smaller subsets.

We first introduce some notation.  Write $m_d$ for the number of monomials in $x_1, \ldots, x_n$ with degree at most $(q-1)$ in each variable and total degree at most $(q-1)n/3$, and write $M(\F_q^n)$ for the upper bound proved in \cite{ellegijs} for the size of a subset of $\F_q^n$ with no three-term arithmetic progressions; to be precise, we have 
\beq
M(\F_q^n) = 3m_{(q-1)n/3}
\eeq
and $M(\F_q^n)$ is bounded above by $c^n$ for some $c < q$.  (We note that for the sake of the present argument there is no need to consider prime powers $q$ other than primes.)

\begin{thm} Let $\F_q$ be a finite field and let $S,T$ be subsets of $\F_q^n$.  Then there is a subset $S'$ of $S$ and a subset $T'$ of $T$ such that
\begin{itemize}
\item $|S'| + |T'| \leq M(\F_q^n)$;
\item $(S'+T) \cup (S+T') = S+T$.
\end{itemize}
\label{th:main}
\end{thm}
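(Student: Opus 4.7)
The plan is to encode the covering problem in the 3-tensor $T: S\times T\times U\to \F_q$ (with $U = S+T$) defined by
$T(s,t,u) = \prod_{i=1}^n\bigl(1-(s_i+t_i-u_i)^{q-1}\bigr)$, which equals $1$ when $s+t=u$ and $0$ otherwise. I will bound the slice-rank of $T$ using the Ellenberg--Gijswijt polynomial argument, and then convert the slice-rank bound into the combinatorial covering using Meshulam's theorem on subspaces of low-rank matrices.

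For the slice-rank bound, the displayed factorization shows $T$ is a polynomial of total degree $\le (q-1)n$ in the $3n$ coordinates. Every monomial $s^\alpha t^\beta u^\gamma$ appearing has $|\alpha|+|\beta|+|\gamma|\le (q-1)n$, so at least one of $|\alpha|,|\beta|,|\gamma|$ is at most $(q-1)n/3$. Grouping monomials accordingly yields a slice-rank decomposition
$T = \sum_{i=1}^{r_1}x_i(s)Y_i(t,u) + \sum_{j=1}^{r_2}x'_j(t)Y'_j(s,u) + \sum_{k=1}^{r_3}x''_k(u)Y''_k(s,t)$
with $r_1+r_2+r_3 \le 3 m_{(q-1)n/3} = M(\F_q^n)$.

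Next, by replacing each family of scalar factors by a basis of its span, I may assume $\{x_i\}$, $\{x'_j\}$, $\{x''_k\}$ are linearly independent. I then pick $S_0\subset S$, $T_0\subset T$ of sizes $r_1,r_2$ on which $\{x_i\}$ and $\{x'_j\}$ restrict to bases of their respective spans. The axis-$3$ slices span a subspace $W\subset \F_q^{S\times T}$ of dimension $\le r_3$. Here Meshulam's theorem on linear spaces of low-rank matrices enters: the matrices $Y''_k$ inherit a bounded-rank structure from the polynomial form of $T$ (each slice is essentially of ``convolution'' type $f(s+t)$), so Meshulam's result produces further subsets $S'_0\subset S$ and $T'_0\subset T$ with $|S'_0|+|T'_0|\le r_3$ such that every matrix in $W$ has its nonzero entries confined to $S'_0\times T \cup S\times T'_0$. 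Setting $S' = S_0\cup S'_0$ and $T' = T_0\cup T'_0$, I obtain $|S'|+|T'|\le r_1+r_2+r_3 \le M(\F_q^n)$.

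Finally, to verify $(S'+T)\cup(S+T') = S+T$, I argue by contradiction: if some $u\in U$ admits no representation $u = s+t$ with $s\in S'$ or $t\in T'$, then every representation has $s\in S\setminus S'$ and $t\in T\setminus T'$. Evaluating the slice-rank identity at such a triple $(s,t,u)$ and using the dual-basis property of $S_0,T_0$ (which pushes the axis-$1$ and axis-$2$ contributions to boundary values vanishing on uncovered $u$) together with the Meshulam cover controlling the axis-$3$ part forces $1 = T(s,t,u) = 0$, a contradiction. The main obstacle is precisely the conversion in Step 2: slice-rank $\le r$ does not by itself imply a combinatorial cover of size $\le r$ (as the all-ones tensor already shows), so the specific polynomial structure of the $Y''_k$ is needed to make Meshulam's theorem applicable and to bound $|S'_0|+|T'_0|$ by $r_3$ rather than merely by $\dim W$ times a worst-case matrix size.
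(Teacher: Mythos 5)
Your proposal has a genuine gap at its central step, and the key rank hypothesis is false. The claim that the axis-$3$ slices $Y''_k(s,t)$ ``inherit a bounded-rank structure'' is not justified by anything in your decomposition, and a small example kills it: take $n=1$, $q=3$, $S=T=\F_q$. The tensor $1-(s+t-u)^2$ has exactly one axis-$3$ slice, $Y''_1(s,t)=1-(s+t)^2$, which over $\F_3$ is the indicator of $\set{s+t=0}$, i.e.\ a permutation matrix of full rank. Slice-rank control bounds the \emph{number} of slices, not the ranks of the slices themselves, so Meshulam's theorem has nothing to grab onto in the space they span. Compounding this, you have misstated Meshulam's conclusion in two ways: the bound on the number of covering lines is the maximal \emph{rank} in the subspace, not the dimension $r_3$; and what gets covered is the \emph{lexicographically first nonzero entry} of each matrix, not the full support. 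The claim that every matrix in $W$ has its support confined to $(S'_0\times T)\cup(S\times T'_0)$ is far stronger than Meshulam's theorem. Finally, the concluding contradiction does not close: choosing $S_0$ so that $\set{x_i|_{S_0}}$ is a basis of its span does not make $x_i(s)$ vanish for $s\notin S_0$, so the axis-$1$ and axis-$2$ sums do not drop out when you evaluate the slice-rank identity at an ``uncovered'' triple.

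The paper's argument routes around all of this. It does not decompose a $3$-tensor; it works with the linear space $V$ of polynomials of total degree $\le d$ (and degree $\le q-1$ in each variable) vanishing on the complement of $S+T$, which has dimension $\ge m_d-q^n+|S+T|$. The injection $P\mapsto M(P)$, $M(P)_{s,t}=P(s+t)$, sends $V$ into matrix space, and the Croot--Lev--Pach lemma shows that \emph{every} matrix in the image has rank $\le 2m_{d/2}$ precisely because each is of the convolution form $P(s+t)$ — this is the rank hypothesis that Meshulam's theorem actually requires. Applying Meshulam to this space covers the pivot positions of a basis of $V$ by $\le 2m_{d/2}$ lines, and the observation your argument has no analogue of is that distinct pivots have distinct sums $s+t$, since $M(P)$ is constant on level sets of $s+t$ and so only the lex-first position on each level set can ever be a pivot. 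The covered lines therefore account for at least $\dim V$ elements of $S+T$; the at most $q^n-m_d$ stragglers are absorbed into $S'$ by hand; and minimizing $2m_{d/2}+q^n-m_d$ over $d$ yields $M(\F_q^n)$. If you want to keep a tensor-flavored formulation, you would at minimum need to build your matrices so that they are all of the form $P(s+t)$ — but at that point you have reconstructed the paper's proof.
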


Applying Theorem~\ref{th:main} to the symmetric case $S=T$, we obtain the following corollary:

\begin{cor}  Let $S$ be a subset of $\F_q^n$.  Then $S$ has a subset $S'$ of size at most $M(\F_q^n)$ such that $S'+S = S+S$.
\label{co:sym}
\end{cor}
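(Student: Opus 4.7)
The corollary is a direct symmetrization of Theorem~\ref{th:main}, and the plan is simply to apply that theorem with $T=S$ and then fold the two resulting subsets into a single one by taking their union.

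Concretely, I would set $T = S$ in Theorem~\ref{th:main} to obtain two subsets $S_1 \subset S$ and $S_2 \subset S$ with $|S_1| + |S_2| \leq M(\F_q^n)$ and $(S_1 + S) \cup (S + S_2) = S + S$. I would then define $S' := S_1 \cup S_2 \subset S$. By subadditivity of cardinality, $|S'| \leq |S_1| + |S_2| \leq M(\F_q^n)$, which gives the claimed size bound.

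It remains to check the sumset identity $S' + S = S + S$. The inclusion $S' + S \subseteq S + S$ is immediate from $S' \subseteq S$. For the reverse inclusion, since $S_1 \subseteq S'$ and $S_2 \subseteq S'$, we have $S_1 + S \subseteq S' + S$ and $S + S_2 \subseteq S' + S$, so
\[
S + S = (S_1 + S) \cup (S + S_2) \subseteq S' + S.
\]

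There is no real obstacle here; the only mild point to notice is that Theorem~\ref{th:main} produces two possibly distinct subsets of $S$, while the corollary asks for one, and the union resolves this without increasing the size bound beyond $M(\F_q^n)$.
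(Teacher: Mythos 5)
Your argument is correct and is exactly the paper's proof: apply Theorem~\ref{th:main} with $T=S$ to get $S_1,S_2\subseteq S$, then set $S'=S_1\cup S_2$. You simply spell out the containment checks that the paper leaves implicit.
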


\begin{proof} By Theorem~\ref{th:main} there are subsets $S_1$ and $S_2$ of $S$ such that $S+S = (S_1+S) \cup (S+S_2)$ and $|S_1| + |S_2| \leq M(\F_q^n)$.  Taking $S'$ to be $S_1 \cup S_2$ we are done.
\end{proof}

This immediately implies the bound proved in \cite{ellegijs} on subsets of $\F_q^n$ with no three terms in arithmetic progression:

\begin{cor}[\cite{ellegijs}]A subset $S$ of $\F_q^n$ containing no three-term arithmetic progression has size at most $M(\F_q^n)$.
\end{cor}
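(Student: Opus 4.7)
The plan is to derive this directly from Corollary~\ref{co:sym} by exploiting the fact that in a 3-AP-free set, the doubled elements $2s$ admit only trivial representations as sums of two elements of $S$.

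First, I would invoke Corollary~\ref{co:sym} to obtain a subset $S' \subseteq S$ with $|S'| \leq M(\F_q^n)$ and $S' + S = S + S$. The key observation is then that it suffices to show $S \subseteq S'$, for then $|S| \leq |S'| \leq M(\F_q^n)$ and we are done.

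To establish $S \subseteq S'$, I would fix $s \in S$ and consider the element $2s \in S + S$. Since $S + S = S' + S$, I can write $2s = s' + t$ for some $s' \in S'$ and $t \in S$. This equation rearranges to $s - s' = t - s$, which means that $(s', s, t)$ is an arithmetic progression in $\F_q^n$. If $s \neq s'$, then the common difference $s - s'$ is nonzero and therefore (assuming $q$ odd, as is needed for the problem to be nontrivial) the three terms $s', s, t$ are pairwise distinct, contradicting the assumption that $S$ contains no three-term arithmetic progression. Hence $s' = s$, which forces $s \in S'$.

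There is no real obstacle here beyond checking the arithmetic: the only thing to be careful about is the degenerate case where $s = s'$ and the trivial "AP" $(s, s, s)$ is not counted as a 3-AP, which is the standard convention and which we have already handled by the case distinction above.
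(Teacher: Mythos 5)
Your proof is correct and takes essentially the same approach as the paper: both invoke Corollary~\ref{co:sym} and observe that if $s \notin S'$, then no representation $2s = s' + t$ with $s' \in S'$, $t \in S$ is possible without producing a nontrivial three-term AP, forcing $S' = S$. You merely spell out the AP arithmetic (and the odd-characteristic caveat) more explicitly than the paper's one-line parenthetical.
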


\begin{proof}
If $S$ has no 3-term arithmetic progression, then $S' +S$ is strictly smaller than $S+S$ for every {\em proper} subset $S' \subset S$ (because $S' + S$ fails to contain $2s$ if $s$ lies in the complement of $S'$.)  Thus, the subset $S'$ guaranteed by Corollary~\ref{co:sym} must be equal to $S$, whence $|S| = |S'| \leq M(\F_q^n)$.
\end{proof}

Theorem~\ref{th:main} also implies the bounds on multi-colored sum-free sets proved in \cite{kleinberg} and \cite{blasiak}.  (We note that \cite{blasiak} proves a substantially more general result which applies, for example, to arbitrary abelian groups of bounded exponent.)
 
\begin{cor}[Th 1, \cite{kleinberg}] Let $S,T$ be subsets of $\F_q^n$ of the same cardinality $N$, assigned an ordering $s_1, \ldots s_N$ and $t_1, \ldots, t_N$ such that the equation $s_i + t_i = s_j + t_k$ holds only when $(j,k) = (i,i)$.  Then $N \leq M(\F_q^n)$.
\label{co:multi}
\end{cor}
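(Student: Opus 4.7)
The plan is to apply Theorem~\ref{th:main} directly to the pair $S,T$ and then use the hypothesis on the orderings to show that each index $i$ forces a new element into either $S'$ or $T'$.

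First I would invoke Theorem~\ref{th:main} to obtain subsets $S' \subset S$ and $T' \subset T$ with $|S'|+|T'| \leq M(\F_q^n)$ such that $(S'+T) \cup (S+T') = S+T$. Next, for each index $i \in \{1,\ldots,N\}$, the element $s_i + t_i$ lies in $S+T$, hence it lies in $S'+T$ or in $S+T'$ (or both). I would then analyze these two cases using the hypothesis: if $s_i+t_i \in S'+T$, write $s_i+t_i = s_j + t_k$ with $s_j \in S'$ and $t_k \in T$; by the assumption on the orderings, this forces $(j,k)=(i,i)$, so $s_i \in S'$. Symmetrically, if $s_i+t_i \in S+T'$, then $t_i \in T'$.

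Letting $I_S = \{i : s_i \in S'\}$ and $I_T = \{i : t_i \in T'\}$, the preceding step shows $I_S \cup I_T = \{1,\ldots,N\}$. Since the $s_i$ (resp.\ $t_i$) are distinct by virtue of being an ordering of $S$ (resp.\ $T$), we have $|I_S| \leq |S'|$ and $|I_T| \leq |T'|$. Hence
\beq
N \leq |I_S| + |I_T| \leq |S'| + |T'| \leq M(\F_q^n),
\eeq
which is the desired conclusion.

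The argument is essentially a direct application of Theorem~\ref{th:main}, so there is no serious obstacle; the only point that requires care is correctly extracting from the hypothesis $s_i + t_i = s_j + t_k \Rightarrow (j,k)=(i,i)$ the conclusion that a representation of $s_i+t_i$ drawn from $S' + T$ forces $s_i$ itself (and not some other $s_j$) to lie in $S'$, and similarly on the other side.
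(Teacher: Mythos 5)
Your proof is correct and follows exactly the paper's argument: apply Theorem~\ref{th:main}, observe that each $s_i+t_i$ lies in $S'+T$ or $S+T'$, and use the uniqueness hypothesis to conclude $s_i \in S'$ or $t_i \in T'$, giving $N \leq |S'|+|T'| \leq M(\F_q^n)$. Your write-up just makes the final counting step slightly more explicit than the paper does.
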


\begin{proof}  Let $S',T'$ be chosen as in Theorem~\ref{th:main}.  Each sum $s_i + t_i$ therefore lies in either $S+T'$ or $S'+T$.  But since $s_i + t_i$ cannot be expressed as $s_j + t_k$ for any other $j,k$, this implies that either $s_i \in S'$ or $t_i \in T'$.  It follows that $N \leq |S'| + |T'| \leq M(\F_q^n)$.
\end{proof}

We now prove Theorem~\ref{th:main}.  The proof is along the same lines as the arguments in the papers cited, but there is one new ingredient:  a result of Meshulam~\cite{meshulam:rank} on linear spaces of matrices of low rank.

\begin{proof}  Let $V$ be the space of polynomials in $\F_q[x_1, \ldots, x_n]$ with degree at most $q-1$ in each variable and total degree at most $d$, that vanish on the complement of $S+T$.  Then $\dim V$ is at least $m_d-q^n + |S+T|$.  Write $\MM$ for the space of of $|S| \times |T|$ matrices, where the rows are understood to be indexed by $S$ and the columns by $T$.

For each $P \in V$ we may consider the matrix $M(P) \in \MM$ whose entries are $P(s+t)_{s \in S, t \in T}$.  By the argument of the Croot-Lev-Pach lemma~\cite{CLP}, this matrix has rank at most $2m_{d/2}$. 

Note that $M$ is a homomorphism from $V$ to $\MM$, which is injective: if $P$ lies in the kernel, it vanishes at $S+T$, but $P$ vanishes on the complement of $S+T$, so $P$ vanishes on every point of $\F_q^n$ and is $0$.

We thus can, and shall, think of $V$ as a vector subspace of $\MM$ of dimension at least $m_d-q^n + |S+T|$, each of whose members has rank at most $2m_{d/2}$.

The arguments of \cite{ellegijs},\cite{kleinberg},\cite{blasiak} proceed by showing that, if $S,T$ satisfy the conditions Corollary~\ref{co:multi}, then $V$ contains a {\em diagonal} matrix with at least $m_d - q^n + |S|$ nonzero entries, which implies
\beq
m_d - q^n + |S| \leq 2m_{d/2},
\eeq
an upper bound on $|S|$.  The mild novelty of the present paper is to exploit the Croot-Lev-Pach rank bound for the whole space $V$, not just for its subspace of diagonal matrices.  The earlier papers use the easy fact that a vector space of diagonal matrices of dimension at least $r$ contains a matrix of rank at least $r$.  For spaces of general matrices, the problem of controlling the maximal rank attained in a linear space of matrices is much richer.  We will use a theorem of Meshulam~\cite[Theorem 1]{meshulam:rank} in this area, which (rather surprisingly to us) turns out to be perfectly adapted to the combinatorial application.  (Indeed, we did not set out to prove Theorem~\ref{th:main}; rather, we encountered Meshulam's theorem and simply worked out what it had to say about sumsets when combined with the argument of \cite{ellegijs}.)
 

In the interest of self-containedness, we state Meshulam's theorem below.  

\begin{thm}[Meshulam]
Let $k$ be a field and $W$ a vector subspace of $M_n(k)$.  For each $w \in W$ let $p(w) \in \set{1,\ldots,n} \times \set{1,\ldots,n}$ be the lexicographically first $(i,j)$ such that the entry $w_{ij}$ is nonzero, and let $\Sigma$ be the set of all $p(w)$ as $w$ ranges over $W$.  Suppose every matrix in $W$ has rank at most $r$.  Then there exists a set of $m$ rows and $m'$ columns such that  every element of $\Sigma$ is contained in one of the rows or one of the columns, and $m+m' \leq r$.
\label{meshulam}
\end{thm}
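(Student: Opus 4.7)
The plan is to prove the contrapositive and reformulate via K\"onig's theorem: viewing $\Sigma$ as a bipartite graph on the disjoint union of the row- and column-indices, the conclusion that a cover with $m+m' \leq r$ exists is equivalent to saying that the maximum matching in $\Sigma$ has size at most $r$. So I would assume, for contradiction, that $\Sigma$ contains a matching $\{(i_1, j_1), \ldots, (i_{r+1}, j_{r+1})\}$ (positions with distinct row-indices and distinct column-indices), and aim to construct an element of $W$ of rank $\geq r+1$. By relabeling, take $i_1 < \cdots < i_{r+1}$, so that lexicographic order on these positions agrees with the index order.

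Next, I would produce a reduced basis adapted to the matching. For each $k$, start with some $w_k \in W$ satisfying $p(w_k) = (i_k, j_k)$, and process the $k$'s from large to small, at each stage subtracting off multiples of the already-processed $w_{k'}$'s to annihilate the entries at the matching positions $(i_{k'}, j_{k'})$. The result is a family $\tilde{w}_k \in W$ with $[\tilde{w}_k]_{i_k, j_k} = 1$, $[\tilde{w}_k]_{i_l, j_l} = 0$ for $l \neq k$, and (inherited from $p(w_k) = (i_k, j_k)$ and preserved by the elimination, since the $\tilde{w}_{k'}$'s subtracted vanish on rows $1, \ldots, i_k - 1$ as well) $[\tilde{w}_k]_{i', j'} = 0$ for every $(i', j')$ strictly before $(i_k, j_k)$ in lex order.

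Now consider the generic combination $w(c) = \sum_k c_k \tilde{w}_k$ and its $(r+1) \times (r+1)$ submatrix $A(c)$ on rows $\{i_1, \ldots, i_{r+1}\}$ and columns $\{j_1, \ldots, j_{r+1}\}$. Because $\tilde{w}_k$ vanishes on rows $1, \ldots, i_k - 1$, the entry $A(c)_{l, m}$ is a homogeneous linear form in $c_1, \ldots, c_l$ only; the diagonal reads $A(c)_{l, l} = c_l$. Hence $\det A(c)$ is homogeneous of degree $r+1$ in $c_1, \ldots, c_{r+1}$. The crux is to compute the coefficient of the monomial $c_1 c_2 \cdots c_{r+1}$. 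Expanding the determinant, to pick one $c$-variable from each factor of $\prod_l A(c)_{l, \sigma(l)}$ so that $c_1, \ldots, c_{r+1}$ each appears exactly once, the constraint "the $l$-th factor only contributes some $c_k$ with $k \leq l$" combined with pigeonhole on the required permutation forces us to take $c_l$ from the $l$-th factor. The coefficient thus reduces to $\det B$, where $B_{l, m} = [\tilde{w}_l]_{i_l, j_m}$. The matrix $B$ has $1$'s on the diagonal and $B_{l, m} = 0$ whenever $j_m < j_l$ (by the lexicographic vanishing past the leading position); simultaneously reordering rows and columns by the permutation that sorts the $j_k$'s into increasing order turns $B$ into a unit upper-triangular matrix, so $\det B = 1$.

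Finally, I would deduce that $\det A$ is a nonzero function $k^{r+1} \to k$. For $k$ infinite, this is immediate by Zariski density since $\det A$ is a nonzero polynomial. For $k = \F_q$, the point is that $\det A(c)$ is homogeneous of degree $r+1$, and among its monomials $\prod_l c_l^{a_l}$ (with $\sum a_l = r+1$), the unique one in which every $a_l \geq 1$ is $c_1 c_2 \cdots c_{r+1}$ itself; consequently no other monomial in $\det A$ can reduce to $c_1 \cdots c_{r+1}$ modulo $c_l^q - c_l$, the coefficient $1$ survives the reduction, and the reduced polynomial is a nonzero function on $\F_q^{r+1}$. Either way we find $c \in k^{r+1}$ with $\det A(c) \neq 0$, giving a rank-$(r+1)$ element of $W$ and the desired contradiction. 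I expect the hardest step is the leading-coefficient calculation (the pigeonhole pinning down the "diagonal" choice, and the triangularization giving $\det B = 1$); the K\"onig duality and the Gaussian-elimination setup are standard, and the finite-field reduction becomes straightforward once homogeneity is exploited.
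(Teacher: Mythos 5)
The paper does not prove Meshulam's theorem; it is quoted directly from \cite{meshulam:rank} (``In the interest of self-containedness, we state Meshulam's theorem below''), so there is no in-paper proof to compare against. Your argument is correct, and it is essentially the argument in Meshulam's original paper: translate the desired row/column cover into a bound on the maximum matching in $\Sigma$ via K\"onig's theorem; given a matching $(i_1,j_1),\dots,(i_{r+1},j_{r+1})$, use the leading-entry structure to build a reduced family $\tilde w_k$ by Gaussian elimination; observe that in the $(r+1)\times(r+1)$ submatrix $A(c)$ of the generic combination $\sum c_k\tilde w_k$ the $(l,m)$ entry involves only $c_1,\dots,c_l$ with $A(c)_{l,l}=c_l$, so the coefficient of $c_1\cdots c_{r+1}$ in $\det A(c)$ is $\det B$ with $B_{l,m}=[\tilde w_l]_{i_l,j_m}$; and check $\det B=1$ since $B$ has unit diagonal and $B_{l,m}=0$ when $j_m<j_l$, hence is unitriangular after conjugating by the permutation sorting the $j_k$. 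Your finite-field step is also sound: since $\det A(c)$ is homogeneous of degree $r+1$ in $r+1$ variables, $c_1\cdots c_{r+1}$ is the unique monomial of that degree involving all the variables, so reduction modulo the ideal $(c_i^q-c_i)$ cannot cancel its coefficient, and a nonzero reduced polynomial (degree $<q$ in each variable) is a nonzero function on $\F_q^{r+1}$. The only minor point worth making explicit is that for $l<k$ the entries $[w_k]_{i_l,j_l}$ already vanish automatically because $(i_l,j_l)$ precedes $p(w_k)=(i_k,j_k)$ lexicographically, so your downward elimination (which only clears the positions with $l>k$) does in fact yield $[\tilde w_k]_{i_l,j_l}=0$ for \emph{all} $l\ne k$, as claimed.
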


We now return to the proof of Theorem~\ref{th:main}.  Choose an ordering on $S$ and an ordering on $T$.  These choices endow the entries of a matrix in $\MM$ with a lexicographic order.  As above, for each matrix $A \in \MM$,  we denote by $p(A) \in S \times T$ the location of the lexicographically first nonzero entry of $A$.

We note that $p(M(P))$ cannot be an arbitrary element of $S \times T$, since $M(P)$ has equal entries at $(s,t)$ and $(s',t')$ whenever $s+t = s'+t'$.  In particular, this means that $(s,t)$ and $(s',t')$ cannot both be $p(M(P))$ for polynomials $P \in V$; only the lexicographically prior of these two pairs can appear.

By Gaussian elimination, there is a basis $A_1, \ldots, A_{\dim V}$ for $V$ such that $p(A_1), \ldots, p(A_{\dim V})$ are distinct.  Now apply Theorem~\ref{meshulam}, which shows that there is a set of $2m_{d/2}$ lines (a line being a row or a column) whose union contains $p(A_i)$ for all $i$.

This set of lines consists of a subset of $S$, which we call $S_0$, and a subset of $T$, which we call $T_0$, satisfying $|S_0|+ |T_0| = 2m_{d/2}$.  

We now have, for $i=1,\ldots,\dim V$,
\beq
p(A_i) = (s_i,t_i)
\eeq
with either $s_i \in S_0$ or $t_i \in T_0$.  What's more, $s_i + t_i$ and $s_j + t_j$ are distinct whenever $i$ and $j$ are.  So the union of $S_0 + T$ with $S + T_0$ contains at least $\dim V$ elements of $S+T$.

Since $\dim V \geq m_d - q_n +  |S+T|$, the set $W$ of elements of $S+T$ {\em not} contained in $(S_0 + T) \cup (S + T_0)$ has cardinality at most $q_n - m_d$.  Let $S_1$ be a subset of $S$ of size $q_n - m_d$ such that each $w \in W$ is represented as $s+t$ for some $s \in S_1$.  Then taking $S' = S_0 \cup S_1$ and $T' = T_0$, we have that $S' + T \cup S + T' $ contains all of $S+T$; moreover,
\beq
|S'| + |T'| \leq 2m_{d/2} + q^n - m_d
\eeq
and minimizing over $d$ we get the desired result.
\end{proof}


\begin{rem} We note that the algebraic approach to bounding sumsets is much older than \cite{CLP} and \cite{ellegijs}; one ancestor, for instance, is Alon's short proof of the Erd\H{o}s-Heilbronn conjecture via combinatorial Nullstellensatz~\cite[Prop 4.2]{alon}, which also proceeds by considering algebraic properties of a polynomial vanishing on the set of distinct sums in an abelian group (in that case a cyclic group.)
\end{rem}

\begin{question}  Corollary~\ref{co:multi}, the bound on multi-colored sum-free sets, can be expressed in a more symmetric, and thus more appealing, form:  Suppose $S,T,U$ are subsets of $\F_q^n$ such that the set
\beq
\set{(s,t,u) \in S \times T \times U: s+t+u=0}
\eeq
forms a perfect matching between the three sets.  Then $|S| = |T| = |U|$ is at most $M(\F_q^3)$.  The proof, too, has a symmetric formulation; Tao introduced the notion of {\em slice rank} for tensors in  $\F_q^n \tensor \F_q^n \tensor \F_q^n$, which was quickly generalized in many directions and applied to a range of further combinatorial problems (see e.g. \cite{naslund:multi}.)

Symmetric methods of this type seem to be the most elegant way to approach these problems. Is there a way to state Theorem~\ref{th:main}, and prove it, as a statement about solutions to $s+t+u = 0$ which places the three summands on an equal footing?
\end{question}

\begin{question}  One naturally wonders whether Theorem~\ref{th:main} has an analogue for cyclic groups.  That is:  let $g(N)$ be the smallest integer such that, for any subsets $S$ and $T$ of $\Z/N\Z$, there are always $S' \subset S$ and $T' \subset T$ with $(S+T') \cup (S'+T) = S+T$ and $|S'| + |T'| \leq g(N)$.  What can we say about the growth of $g(N)$?   Behrend's example~\cite{behrend} of a large subset of $\Z/N\Z$ with no three-term arithmetic progressions shows that $g(N)$ would have to be at least $N^{1-\epsilon}$.  Jacob Fox and Will Sawin explained to me that $g(N) = o(N)$ follows from known bounds for arithmetic triangle removal. 
\end{question}





\section*{Acknowledgments} 
The author is supported by NSF Grant DMS-1402620 and a Guggenheim Fellowship.  He thanks Jacob Fox, Will Sawin, the referees, and the readers of Quomodocumque for useful discussions about the subject of this paper.

\begin{bibdiv}
\begin{biblist}

\bib{alon}{article}{
title={Combinatorial nullstellensatz},
author={Alon,Noga},
  journal={Combinatorics, Probability and Computing},
  volume={8},
  number={1-2},
  pages={7--29},
  publisher={Cambridge University Press},
    year={1999}
}

\bib{blasiak}{article}{
  title={On cap sets and the group-theoretic approach to matrix multiplication},
  author={Blasiak, Jonah }
  author ={Church, Thomas}
  author ={Cohn, Henry}
  author={Grochow, Joshua A}
  author={Naslund, Eric}
  author={Sawin, William F.}
  author={Umans, Chris},
  journal={Discrete Analysis 2017:3},
}

\bib{behrend}{article}{
  title={On sets of integers which contain no three terms in arithmetical progression},
  author={Behrend, Felix A},
  journal={Proceedings of the National Academy of Sciences},
  volume={32},
  number={12},
  pages={331--332},
  year={1946},
  publisher={National Acad Sciences}
}

\bib{CLP}{article}{
	title={Progression-free sets in $\mathbf{Z}_4^n$ are exponentially small},
	author={Croot,Ernie},
	author={Lev,Vsevolod},
	author={Pach,P'{e}ter P\'{a}l},
	journal={Ann. of Math.}
	volume={185},
	number={1},
	pages={331-337},
	year={2017}
	}

\bib{ellegijs}{article}{
	title={On large subsets of $\F_q^n$ with no three-term arithmetic progression},
	author={Ellenberg, Jordan S.},
	author={Gijswijt,Dion},
	journal={Ann. of Math.},
	volume={185},
	number={1},
	pages={339-343},
	year={2017}
	}

\bib{fox}{article}{
  title={A tight bound for Green's boolean removal lemma},
  author={Fox, Jacob},
  author={Lov{\'a}sz, L{\'a}szl{\'o} Mikl{\'o}s},
  journal={arXiv preprint arXiv:1606.01230},
  year={2016}
}

\bib{kleinberg}{article}{
  title={A nearly tight upper bound on tri-colored sum-free sets in characteristic 2},
  author={Kleinberg, Robert},
  journal={arXiv preprint arXiv:1605.08416},
  year={2016}
}


\bib{meshulam:rank}{article}{	
  title={On the maximal rank in a subspace of matrices},
  author={Meshulam, Roy},
  journal={The Quarterly Journal of Mathematics},
  volume={36},
  number={2},
  pages={225--229},
  year={1985},
  publisher={Oxford University Press}
}

\bib{naslund}{article}{
  title={Upper bounds for sunflower-free sets},
  author={Naslund, Eric}
  author={Sawin, William F.},
  journal={arXiv preprint arXiv:1606.09575},
  year={2016}
}

\bib{naslund:multi}{article}{
  title={The multi-slice rank method and polynomial bounds for orthogonal systems in $\mathbf{F}_q^n$},
  author={Naslund, Eric},
  journal={arXiv preprint arXiv:1701.04475},
  year={2017}
}
%

\end{biblist}
\end{bibdiv}

%
%

\begin{dajauthors}
\begin{authorinfo}[pgom]

Jordan S. Ellenberg \\
Professor \\
University of Wisconsin-Madison \\
ellenber\imageat{}math\imagedot{}wisc\imagedot{}edu \\
\url{http://www.math.wisc.edu/~ellenber}
%
%
\end{authorinfo}
\end{dajauthors}

\end{document}